\documentclass[12pt, a4paper,reqno]{amsart}

\usepackage[euler-digits]{eulervm}
\usepackage{graphicx,enumitem}
\usepackage{amsfonts, amsthm, amssymb, amsmath, stmaryrd}
\usepackage{mathrsfs,array}
\usepackage{eucal,times,color,accents}
\usepackage{url}
\usepackage{float}
\usepackage{pbox}
\usepackage[headings]{fullpage}
\usepackage{enumitem}
\usepackage{ytableau}
\usepackage{color}
\usepackage{mathrsfs}
\usepackage{amssymb}
\usepackage{bm}
\usepackage{amssymb}
\usepackage{ulem}
\usepackage{hyperref, cleveref}
\usepackage[all,cmtip]{xy}
\usepackage{comment}

\newcommand{\ncom}{\newcommand}

\ncom{\dho}{\partial}
\ncom{\rar}{\rightarrow}
\ncom{\imply}{\Rightarrow}
\ncom{\lrar}{\longrightarrow}
\ncom{\into}{\hookrightarrow}
\ncom{\onto}{\twoheadrightarrow}
\ncom{\ov}{\overline}
\ncom{\m}{\mbox}
\ncom{\sta}{\stackrel}
\ncom{\invlim}{\varprojlim}
\ncom{\xhat}{\widehat}
\ncom{\vspc}{\vspace{3mm}}
\ncom{\End}{{\cE}nd}
\ncom{\tensor}{\otimes}
\ncom{\al}{\alpha}
\ncom{\cHom}{{\mathcal Hom}}
\ncom{\A}{{\mathbb A}}
\ncom{\comx}{{\mathbb C}}
\ncom{\E}{{\mathbb E}}
\ncom{\F}{{\mathbb F}}
\ncom{\G}{{\mathbb G}}
\ncom{\K}{{\mathbb K}}
\ncom{\Le}{{\mathbb L}}
\ncom{\N}{{\mathbb N}}
\ncom{\p}{{\mathbb P}}
\ncom{\Q}{{\mathbb Q}}
\ncom{\R}{{\mathbb R}}
\ncom{\Z}{{\mathbb Z}}
\ncom{\f}{\dfrac}
\ncom{\wtil}{\widetilde}
\ncom{\ci}{{\mathpzc i}}
\ncom{\cA}{{\mathcal A}}
\ncom{\cB}{{\mathcal B}}
\ncom{\cC}{{\mathcal C}}
\ncom{\cE}{{\mathcal E}}
\ncom{\cF}{{\mathcal F}}
\ncom{\cG}{{\mathcal G}}
\ncom{\cH}{{\mathcal H}}
\ncom{\cI}{{\mathcal I}}
\ncom{\cJ}{{\mathcal J}}
\ncom{\cK}{{\mathcal K}}
\ncom{\cL}{{\mathcal L}}
\ncom{\cM}{{\mathcal M}}
\ncom{\cN}{{\mathcal N}}
\ncom{\cO}{{\mathcal O}}
\ncom{\cP}{{\mathcal P}}
\ncom{\cQ}{{\mathcal Q}}
\ncom{\cR}{{\mathcal R}}
\ncom{\cS}{{\mathcal S}}
\ncom{\cT}{{\mathcal T}}
\ncom{\cU}{{\mathcal U}}
\ncom{\cV}{{\mathcal V}}
\ncom{\cW}{{\mathcal W}}
\ncom{\cX}{{\mathcal X}}
\ncom{\cY}{{\mathcal Y}}
\ncom{\cZ}{{\mathcal Z}}
\ncom{\cSU}{{\mathcal S \mathcal U}}
\ncom{\eop}{{\hfill $\Box$}}
\ncom{\isom}{\cong}

\theoremstyle{plain}
\newtheorem{theorem}{Theorem}

\newtheorem{thm}[theorem]{Theorem}

\newtheorem{corollary}[theorem]{Corollary}
\newtheorem{proposition}[theorem]{Proposition}
\theoremstyle{definition}

\theoremstyle{remark}
\newtheorem{remark}{Remark}
\newtheorem{definition}{Definition}
\newtheorem{eg}{Example}

\long\def\comment#1{}

\begin{document}
	\title{On a relative dependency formula}
	
	\author{Shashi Ranjan Sinha}
	\address{Department of Mathematics, Indian Institute of Technology -- Hyderabad, 502285, India.}
	\email{ma20resch11005@iith.ac.in}
	\subjclass[2020]{13D02, 13D05, 13D07} 
	
	\begin{abstract} 
		Celikbas, Liang and Sadeghi established a one-sided inequality for the relative version of Jorgensen's dependency formula and questioned whether it would be an  equality. In this paper, we show that the inequality can be indeed strict, and prove a relative dependency formula. Along the way, we obtain some bounds on $s(M,N)$, a notion related to the vanishing of relative homology of finitely generated modules $M$ and $N$ over a local ring $R$, under specific assumptions.
	\end{abstract}
	
	\keywords{relative homology, dependency formula, Gorenstein dimension}
	\date{\today}
	\maketitle
	\section{Introduction}
	Throughout the paper, $R$ stands for a commutative Noetherian local ring $(R,m,k)$ and all $R\mbox{-}$modules are assumed to be nonzero and finitely generated. For any $R\mbox{-}$modules $M$ and $N$, define $$q(M,N) = sup\{i \geq 0 \mid Tor_i ^{R}  (M,N) \neq 0\}, $$ $$ \text{and} \hspace{4mm} t(M,N) = sup\{depth R_p - depth M_p - depth N_p \mid p  \in Supp(M\otimes_{R} N)\}.$$ If, in addition, $M$ admits a proper resolution, then define $$s(M,N) = sup\{i \geq 0 \mid \mathcal{G}Tor_i ^{R}  (M,N) \neq 0\},$$ where $\mathcal{G}Tor_i ^{R}  (M,N)$ denotes the $i\textsuperscript{th}$ relative homology of $M$ and $N$ (see Section 2 for the definitions).
	
	We say that $M$ and $N$ satisfy the \textit{dependency formula} if $q(M,N) = t(M,N)$. This formula yields the classical Auslander$\mbox{-}$Buchsbaum formula when $N$ is taken to be $k$. Jorgensen \cite{J} proved the dependency formula for the modules $M$ and $N$ provided $q(M,N) < \infty$ and one of the two modules has finite complete intersection dimension. Later on, Celikbas, Liang, and Sadeghi discussed a $\mathcal{G} \mbox{-}$relative version of the dependency inequality, and they proved the subsequent result.
	
	\begin{proposition}\cite[Corollary 3.18]{CLS}\label{s<_t}
		Let $M$ and $N$ be $R\mbox{-}$modules such that $M$ has finite Gorenstein dimension. Then $s(M,N) \leq t(M,N)$.
	\end{proposition}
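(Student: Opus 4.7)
My plan is to reduce the proposition to the case of modules of finite projective dimension, where Jorgensen's dependency formula $q=t$ is already known. First, since $\mathcal{G}\mathrm{Tor}$ commutes with localization and $t(M,N)$ is already a supremum over primes, it suffices to prove the pointwise statement that, for every prime $p$ at which $\mathcal{G}\mathrm{Tor}_i(M,N)_p \neq 0$, the index $i$ satisfies $i \leq \mathrm{depth}\,R_p - \mathrm{depth}\,M_p - \mathrm{depth}\,N_p$. This reframes the global inequality as a local depth comparison.

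The main tool is the Auslander--Buchweitz approximation of $M$: since $\text{G-dim}\,M < \infty$, we obtain a short exact sequence
\begin{equation*}
0 \to K \to G \to M \to 0
\end{equation*}
with $G$ totally reflexive and $\mathrm{pd}\,K < \infty$. Two homological inputs then apply: $G$ totally reflexive gives $\mathcal{G}\mathrm{Tor}_i(G,N)=0$ for $i \geq 1$, and $K$ of finite projective dimension gives $\mathcal{G}\mathrm{Tor}_i(K,N) \cong \mathrm{Tor}_i(K,N)$, the latter via a standard dimension shift against the vanishing $\mathrm{Ext}^{>0}(G',Q)=0$ for totally reflexive $G'$ and projective $Q$, which shows that any projective resolution of $K$ is a proper Gorenstein projective resolution. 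Feeding these into the long exact sequence of relative Tor collapses it to $\mathcal{G}\mathrm{Tor}_i(M,N) \cong \mathrm{Tor}_{i-1}(K,N)$ for $i \geq 2$, together with a four-term sequence $0 \to \mathcal{G}\mathrm{Tor}_1(M,N) \to K \otimes_R N \to G \otimes_R N \to M \otimes_R N \to 0$ in low degree.

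I would then invoke Jorgensen's dependency formula for $K$: finite projective dimension yields finite complete intersection dimension, so $q(K,N)=t(K,N)$. Therefore any nonvanishing $\mathcal{G}\mathrm{Tor}_i(M,N)$ with $i \geq 2$ produces a prime $p$ satisfying $i-1 \leq \mathrm{depth}\,R_p - \mathrm{depth}\,K_p - \mathrm{depth}\,N_p$. The depth lemma applied to $0 \to K \to G \to M \to 0$ at $p$, combined with $\mathrm{depth}\,G_p = \mathrm{depth}\,R_p$ (from total reflexivity of $G$), yields $\mathrm{depth}\,R_p - \mathrm{depth}\,K_p \leq \max\{0,\,\mathrm{depth}\,R_p - \mathrm{depth}\,M_p - 1\}$. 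The first alternative would force $K_p$ to be free by Auslander--Buchsbaum, contradicting $\mathrm{Tor}_{i-1}(K,N)_p \neq 0$ for $i \geq 2$; the second alternative yields $i \leq \mathrm{depth}\,R_p - \mathrm{depth}\,M_p - \mathrm{depth}\,N_p$, and the same dichotomy guarantees $p \in \mathrm{Supp}(M \otimes_R N)$. The low-degree cases $i \in \{0,1\}$ are handled directly from the four-term sequence. The most delicate step will be the depth-lemma bookkeeping that converts the $K$-bound into the $M$-bound; this is where the finite Gorenstein dimension hypothesis is most essential, via the Auslander--Bridger equality $\text{G-dim}\,M_p = \mathrm{depth}\,R_p - \mathrm{depth}\,M_p$ that rules out the "free" alternative.
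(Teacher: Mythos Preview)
The paper does not supply its own proof of this proposition; it is quoted verbatim from \cite[Corollary~3.18]{CLS} and used as input. What the paper \emph{does} prove, by exactly the $\mathcal G$-approximation / Jorgensen technique you outline, is the reverse inequality (Theorem~\ref{t <_s+1}). So your strategy is in the right spirit and symmetric to the paper's, but there is nothing in the paper to compare it against line by line.

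That said, your reduction step is wrong. The claim ``for every prime $p$ with $\mathcal G\mathrm{Tor}_i(M,N)_p\neq 0$ one has $i\le \operatorname{depth}R_p-\operatorname{depth}M_p-\operatorname{depth}N_p$'' is strictly stronger than $s(M,N)\le t(M,N)$ and is false. Take $R=k[[x,y]]$ and $M=N=R/(x)$; then $\mathcal G\mathrm{Tor}_1(M,N)=\mathrm{Tor}_1(M,N)\cong R/(x)$ is nonzero at $\mathfrak m$, yet $\operatorname{depth}R-\operatorname{depth}M-\operatorname{depth}N=2-1-1=0<1$. The inequality $s\le t$ still holds here, but only because the prime $(x)$ witnesses $t=1$; you cannot demand the bound at \emph{every} prime in the support. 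Consequently your later step ``$K_p$ free contradicts $\mathrm{Tor}_{i-1}(K,N)_p\neq 0$'' is unjustified, since the prime $p$ you actually produce comes from $t(K,N)$, not from the support of $\mathrm{Tor}_{i-1}(K,N)$.

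Your global argument can be repaired for $s\ge 2$ without the pointwise reduction: pick $p$ realizing $t(K,N)=s-1$; if $K_p$ were free then $\operatorname{depth}R_p-\operatorname{depth}K_p-\operatorname{depth}N_p=-\operatorname{depth}N_p\le 0<s-1$, absurd, so $\operatorname{depth}K_p<\operatorname{depth}R_p$ and the depth lemma gives $\operatorname{depth}M_p\le\operatorname{depth}K_p-1$, whence $t(M,N)\ge s$. But this breaks down for $s\in\{0,1\}$, where ``$-\operatorname{depth}N_p\ge s-1$'' is no longer a contradiction, and your four-term sequence alone does not produce the needed depth inequality. Those low-degree cases require a genuinely different argument (in \cite{CLS} they go through the depth formula of Proposition~\ref{depth_formula}), and you have not supplied one.
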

	
	Additionally, they asked in \cite[Question 3.19]{CLS} whether the claim in the aforementioned proposition would always have equality. The main aim of this note is to provide a complete answer to their question. We show that,
	
	\begin{thm}\label{t <_s+1}
		Let $M$ and $N$ be $R\mbox{-}$modules such that $M$ has finite Gorenstein dimension. If $t(M,N) \neq 1$ then $s(M,N) = t(M,N)$, and if $t(M,N) = 1$, then $s(M,N)\in \{0,1\}$.
	\end{thm}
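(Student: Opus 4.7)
\medskip

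\textbf{Proof plan.} The inequality $s(M,N) \leq t(M,N)$ from Proposition \ref{s<_t} reduces the theorem to producing a matching lower bound in the appropriate regime. The extremal cases are immediate: when $t(M,N) = 0$, nonemptiness of $Supp(M \otimes_R N)$ forces $\mathcal{G}Tor_0^R(M,N) \cong M \otimes_R N \neq 0$, so $s(M,N) = 0 = t(M,N)$; when $t(M,N) = 1$, the conclusion $s(M,N) \in \{0,1\}$ follows at once from $s \leq t$.

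For the essential case $t := t(M,N) \geq 2$, I would prove $s(M,N) \geq t$ by reducing the computation of $\mathcal{G}Tor^R(M,N)$ to a classical Tor computation over a module of finite projective dimension, and then invoking Jorgensen's original dependency formula. Let $g$ denote the Gorenstein dimension of $M$. Choosing a prime $p$ attaining the supremum defining $t(M,N)$, the Auslander--Bridger formula gives $g_p = t + depth\, N_p \geq t \geq 2$, so $g \geq 2$. The Auslander--Buchweitz approximation then provides
\[
0 \longrightarrow Y \longrightarrow X \longrightarrow M \longrightarrow 0
\]
with $X$ totally reflexive and $pd_R Y = g - 1 \geq 1$. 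The key technical step is to promote this to a long exact sequence of relative homology $\mathcal{G}Tor^R(-, N)$; combined with the vanishing $\mathcal{G}Tor_{\geq 1}^R(X, N) = 0$ for totally reflexive $X$ and the identification $\mathcal{G}Tor_i^R(Y, N) = Tor_i^R(Y, N)$ for $Y$ of finite projective dimension, a dimension shift then yields
\[
\mathcal{G}Tor_i^R(M,N) \;\cong\; Tor_{i-1}^R(Y,N) \quad \text{for every } i \geq 2.
\]

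With this identification in hand, Jorgensen's dependency formula \cite{J} applies to $(Y,N)$: finite projective dimension of $Y$ gives finite complete intersection dimension, and $q(Y,N) \leq pd_R Y < \infty$ automatically, so $q(Y,N) = t(Y,N)$. A depth calculation shows $t(Y,N) = t - 1$: at every prime $p \in Supp(Y \otimes_R N) \cap Supp(M \otimes_R N)$, the local Gorenstein dimension $g_p$ is at least $1$, so Auslander--Bridger gives $depth\, Y_p = depth\, M_p + 1$ and shifts the relevant depth formula down by exactly one; the prime attaining $t(M,N)$ lies in $Supp(Y \otimes_R N)$ because $g_p \geq 2$ there, while primes in $Supp(Y \otimes_R N) \setminus Supp(M \otimes_R N)$ (at which $Y$ is locally free) contribute non-positively. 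Consequently $Tor_{t-1}^R(Y,N) \neq 0$, whence $\mathcal{G}Tor_t^R(M,N) \neq 0$ and $s(M,N) \geq t$, closing the gap with Proposition \ref{s<_t}.

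The main obstacle is the low-degree analysis. For $i = 1$ the same comparison yields only $\mathcal{G}Tor_1^R(M,N) \cong \ker(Y \otimes_R N \to X \otimes_R N)$, a module that can vanish even when $Tor_1^R(M,N)$ does not --- by the long exact sequence associated to the approximation, this kernel equals the quotient $Tor_1^R(M,N)/\image(Tor_1^R(X,N) \to Tor_1^R(M,N))$, and the image term can absorb all of $Tor_1^R$. This is precisely the mechanism allowing $s(M,N) = 0$ in the case $t(M,N) = 1$, and explains structurally why the dependency equality must be weakened to $s(M,N) \in \{0,1\}$ in that single degree.
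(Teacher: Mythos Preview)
Your argument is correct and follows essentially the same route as the paper's: both take a $\mathcal{G}$-approximation $0 \to Y \to X \to M \to 0$, reduce to Jorgensen's dependency formula for the finite-projective-dimension module $Y$, and compare depths locally via Auslander--Bridger; the only difference is the direction of the bookkeeping (the paper bounds $t$ from above given $s$, while you bound $s$ from below given $t$). One small inaccuracy worth fixing: it is not true that $g_p \geq 1$ for every $p \in Supp(Y \otimes_R N) \cap Supp(M \otimes_R N)$, since at primes where $M_p$ is totally reflexive the module $Y_p$ can still be a nonzero free module --- but such primes contribute non-positively to $t(Y,N)$ anyway, so the equality $t(Y,N) = t-1$ (and hence the whole argument) survives.
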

	
	We further discuss Example \ref{main_eg} which, as shown in Remark \ref{Remark_s<=t-1}, suggests that the strict inequality in Theorem \ref{t <_s+1} may indeed hold.
	
	The notion of the \textit{relative homology} was first defined and studied in \cite{AM} by Avramov and Martsinkovsky. In that paper, they establish several results centered around the relative homology; one of such results is as below. 
	\begin{thm}\cite[Theorem 4.2(a)]{AM} \label{Char_Gdim}
		Let $M$ be an $R\mbox{-}$module which admits a proper resolution. Then for each $r \geq 0$, the following conditions are equivalent.
		\begin{enumerate}
			\item $G\mbox{-}dim_{R} (M) \leq r$;
			\item $\mathcal{G}Tor_i ^{R}  (M,N) = 0$ \:for all $i \geq r+1 $ and for every $R\mbox{-}$module $N$;
			\item $\mathcal{G}Tor_{r+1} ^{R}  (M,N) = 0$ \:for every $R\mbox{-}$module $N$.
		\end{enumerate}
	\end{thm}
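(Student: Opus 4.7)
The plan is to prove the implications $(1) \Rightarrow (2) \Rightarrow (3) \Rightarrow (1)$. The first two are largely formal once a few basic properties of $\mathcal{G}Tor$ are in place; $(3) \Rightarrow (1)$ is where the substance lies.

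For $(1) \Rightarrow (2)$, I would fix any proper G-resolution $\cdots \to G_1 \to G_0 \to M \to 0$, whose existence is part of the hypothesis on $M$. If $G\mbox{-}dim_R (M) \leq r$, a standard dimension-shifting argument applied to the finite G-projective resolution promised by the hypothesis shows that the $r$-th kernel $K_r = \ker(G_{r-1} \to G_{r-2})$ of the chosen \emph{proper} resolution is itself Gorenstein projective. The truncated sequence
\[
0 \to K_r \to G_{r-1} \to \cdots \to G_0 \to M \to 0
\]
is then a proper G-resolution of length at most $r$, and computing $\mathcal{G}Tor^R(M, N)$ from it produces vanishing in degrees $\geq r+1$. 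The implication $(2) \Rightarrow (3)$ is immediate.

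For $(3) \Rightarrow (1)$, start again from a proper G-resolution of $M$ and form the syzygies $K_i = \ker(G_{i-1} \to G_{i-2})$, with the convention $K_0 = M$. The first task is to establish three preliminaries: (a) every Gorenstein projective module has vanishing higher relative Tor, which is immediate from the definition of $\mathcal{G}Tor$ via proper resolutions; (b) a proper short exact sequence induces a long exact sequence in $\mathcal{G}Tor^R(-, N)$ for every $N$; (c) iterating the resulting dimension-shifting isomorphism along the proper sequences $0 \to K_{i+1} \to G_i \to K_i \to 0$ yields $\mathcal{G}Tor^R_{r+1}(M, N) \cong \mathcal{G}Tor^R_1(K_r, N)$ for every $N$. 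Under hypothesis $(3)$, this common value vanishes for every $N$, and the problem reduces to showing that $K_r$ is Gorenstein projective.

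The decisive and most delicate step is precisely this final reduction: deducing Gorenstein-projectivity of $K_r$ from the vanishing $\mathcal{G}Tor^R_1(K_r, -) \equiv 0$. My intended approach is to form a proper presentation $0 \to L \to P \to K_r \to 0$ with $P$ projective, use the long exact sequence to transfer the vanishing to $L$, and then splice projective covers on one side with the truncated proper G-resolution of $K_r$ coming from $G_\bullet$ on the other, assembling a totally acyclic complex of projectives with $K_r$ as its $0$-th syzygy. The main obstacle I expect is precisely in certifying total acyclicity: upgrading the \emph{relative} vanishing to the \emph{absolute} $\Ext$-vanishing against every projective test module required for the Enochs--Jenda characterization of Gorenstein projectivity. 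This compatibility between the relative and absolute derived functors is where the bulk of the technical work will sit.
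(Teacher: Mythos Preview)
The paper does not contain a proof of this theorem; it is quoted from Avramov--Martsinkovsky \cite[Theorem~4.2(a)]{AM} and used as a black box throughout, so there is no in-paper argument to compare your proposal against. The nearest the paper comes is Proposition~\ref{Char_Gdim_poistive}, which, under the \emph{extra} hypothesis $G\text{-}\dim_R(M)<\infty$ and for $r\ge 1$, adds a fourth equivalent condition (vanishing of the single module $\mathcal{G}Tor_{r+1}^R(M,k)$) and proves only the new implication $(4)\Rightarrow(1)$, by passing to a $\mathcal{G}$-approximation $0\to X\to G\to M\to 0$ and reducing to ordinary $Tor$ of the finite-projective-dimension module $X$.

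On your outline itself: the implications $(1)\Rightarrow(2)\Rightarrow(3)$ and the dimension shift that reduces $(3)$ to the statement ``$\mathcal{G}Tor_1^R(K_r,-)\equiv 0$ forces $K_r$ to be totally reflexive'' are correctly set up. The obstacle you yourself flag in the final paragraph, however, is genuine and your proposed attack does not close it. Splicing a projective resolution of $K_r$ onto the tail of $G_\bullet$ and hoping the result is totally acyclic requires, among other things, $\Ext_R^{>0}(K_r,R)=0$, and nothing in your sketch extracts that \emph{absolute} $\Ext$-vanishing from the \emph{relative} $Tor$-vanishing you have in hand; the ``compatibility'' you allude to is exactly the missing ingredient, not a routine verification. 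In \cite{AM} this difficulty is handled by running the $Tor$ statement in tandem with its relative $\Ext$ companion: the relative $\Ext^1$ classifies proper short exact sequences, so its vanishing forces the proper sequence $0\to K_{r+1}\to G_r\to K_r\to 0$ to split, exhibiting $K_r$ as a direct summand of a totally reflexive module. If you insist on staying purely on the $Tor$ side, you need an independent bridge from $\mathcal{G}Tor_1^R(K_r,-)$ to either relative $\Ext^1$ or to $\Ext_R^{>0}(K_r,R)$, and that bridge is absent from your proposal.
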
 
	
	Theorem \ref{Char_Gdim} indicates that $s(M,\mbox{-})$ is bounded above by the Gorenstein dimension of $M$, assuming that the latter is finite. We prove the following theorem, which reveals that this upper bound on $s(M,\mbox{-})$ can be improved under various conditions.
	
	\begin{thm}\label{Improved_bound}
		Let $M$ and $N$ be $R\mbox{-}$modules such that $M$ has finite Gorenstein dimension. Then the following statements hold.
		\begin{enumerate}[label=(\Alph*)]
			\item \label{Suff_rel_vanish} Consider the following conditions:
			\begin{enumerate}[label=(\roman*)]
				\item $N$ satisfies $\widetilde{S_k}$ \:for some $k \leq G\mbox{-} dim_R (M)=r$.
				\item $H\mbox{-} dim(N) < \infty$ and $M$ satisfies $\widetilde{S_k}$ \:for some $k \leq H\mbox{-} dim(N)=r$, where $H\mbox{-} dim(N)$ denotes a homological dimension of $N$ (see Section 2 for details).
			\end{enumerate} If atleast one of the above two conditions hold, then $s(M,N) \leq r-k$.
			\item \label{Suuf_rel_hom} Assume the following conditions hold: 
			\begin{enumerate}[label=(\roman*)]
				\item $G\mbox{-} dim_{R} (M) \leq depthN$, and
				\item $depth (\mathcal{G}Tor_i ^{R} (M,N)) \in \{0, \infty\}$ \:for all $i =1,..., G\mbox{-} dim_{R} (M)$.
				
			\end{enumerate}
			If \:$\widehat{Tor} _i ^{R} (M,N) = 0$ \:for all $i \leq 0$, then $s(M,N) = 0$.
		\end{enumerate}
		
	\end{thm}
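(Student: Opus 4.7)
Both parts hinge on the Avramov--Martsinkovsky long exact sequence, which is available because $M$ has finite Gorenstein dimension:
$$\cdots \to \widehat{Tor}_{i+1}^R(M,N) \to \mathcal{G}Tor_i^R(M,N) \to Tor_i^R(M,N) \to \widehat{Tor}_i^R(M,N) \to \mathcal{G}Tor_{i-1}^R(M,N) \to \cdots$$
which ties the relative Tor to the absolute Tor and the Tate Tor. My plan in each case is to force $\mathcal{G}Tor_i^R(M,N) = 0$ in the desired range by establishing the vanishing of both the absolute and Tate Tor modules that bracket it in this sequence.

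For Part \ref{Suff_rel_vanish}(i), I would pass to a suitable syzygy of $M$ in a proper resolution in order to reduce to Gorenstein dimension $\leq r - k$, and then combine the $\widetilde{S_k}$ hypothesis on $N$ with a standard Serre-type Tor-vanishing argument (either proved directly or invoked from \cite{AM}, \cite{CLS}) to conclude $Tor_i^R(M,N) = 0$ for $i > r - k$. The same hypotheses, coupled with a complete-resolution argument for $M$, should give $\widehat{Tor}_i^R(M,N) = 0$ in this range too, and substituting into the sequence above yields $\mathcal{G}Tor_i^R(M,N) = 0$ for $i > r - k$. Part \ref{Suff_rel_vanish}(ii) is parallel, exploiting the symmetry of Tor: the roles of the two homological invariants switch, but the exact-sequence step is unchanged since only $M$'s finite Gorenstein dimension is needed to run the AM machinery.

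For Part \ref{Suuf_rel_hom}, my plan is to combine the hypothesis $\widehat{Tor}_i^R(M,N) = 0$ for $i \leq 0$ with the finite Gorenstein dimension of $M$ to propagate Tate Tor vanishing into the positive range, using the periodicity properties of Tate homology for modules of finite $G$-dim together with the degree shift produced by truncating a proper resolution. Once $\widehat{Tor}_i^R(M,N) = 0$ for all $i$, the AM sequence identifies $\mathcal{G}Tor_i^R(M,N)$ with $Tor_i^R(M,N)$ in positive degrees. The inequality $G\mbox{-}dim_R M \leq depth N$ then supplies a depth lower bound for any nonvanishing $\mathcal{G}Tor_i^R(M,N)$ via a relative Auslander-style depth formula, forcing $depth(\mathcal{G}Tor_i^R(M,N)) > 0$ whenever this module is nonzero, which is incompatible with hypothesis (ii). Hence $\mathcal{G}Tor_i^R(M,N) = 0$ for every $i \geq 1$, i.e., $s(M,N) = 0$.

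The main obstacle I anticipate is the Tate Tor control in Part \ref{Suuf_rel_hom}: extending vanishing from non-positive to positive degrees is not a formal consequence of the AM sequence alone and likely requires the explicit complete-resolution description of $\widehat{Tor}$, possibly combined with a spectral-sequence or rigidity lemma. A milder version of this difficulty appears in Part \ref{Suff_rel_vanish}, where Tate Tor vanishing is needed only in a truncated range. Once these vanishings are secured, the long exact sequence together with standard depth formulas and Theorem \ref{Char_Gdim} should complete both arguments mechanically.
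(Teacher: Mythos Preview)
Your approach has genuine gaps in both parts.

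For Part~(A), the core step you propose---proving $Tor_i^R(M,N)=0$ and $\widehat{Tor}_i^R(M,N)=0$ for $i>r-k$---is simply false in general. Take $R$ Gorenstein local of depth $r\ge 2$, $M=k$, and $N$ any maximal Cohen--Macaulay module of infinite projective dimension. Then $G\text{-}dim_R(M)=r$ and $N$ satisfies $\widetilde{S_k}$ for every $k$, yet $Tor_i^R(k,N)\neq 0$ for infinitely many $i$, and likewise $\widehat{Tor}_i^R(k,N)$ does not vanish. So the long exact sequence cannot be used this way. The paper's argument is far shorter and avoids the AM sequence entirely: it bounds $t(M,N)$ by $r-k$ via a direct depth computation (for each $p\in\mathrm{Supp}(M\otimes_R N)$, the $\widetilde{S_k}$ hypothesis on $N$ and the Auslander--Bridger formula for $M_p$ give $\mathrm{depth}\,R_p-\mathrm{depth}\,M_p-\mathrm{depth}\,N_p\le r-k$), and then invokes Proposition~\ref{s<_t} to conclude $s(M,N)\le t(M,N)\le r-k$.

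For Part~(B), your plan to ``propagate Tate Tor vanishing into the positive range'' is the problem. Vanishing of $\widehat{Tor}_i^R(M,N)$ for $i\le 0$ does not force vanishing for $i>0$; there is no such rigidity for Tate homology without strong extra hypotheses (e.g.\ complete intersection), and the paper neither has nor needs it. The paper instead applies Proposition~\ref{depth_formula} directly: that depth formula already only requires $\widehat{Tor}_i^R(M,N)=0$ for $i\le 0$. Setting $s=s(M,N)$ and assuming $s\ge 1$, hypothesis~(ii) gives $\mathrm{depth}(\mathcal{G}Tor_s^R(M,N))=0$, whence the formula yields $\mathrm{depth}\,M+\mathrm{depth}\,N=\mathrm{depth}\,R-s<\mathrm{depth}\,R$, so $G\text{-}dim_R(M)>\mathrm{depth}\,N$, contradicting~(i). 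Your final depth-contradiction idea is on target, but you reach it only after an unnecessary and unprovable detour. (Incidentally, you have also written the AM sequence in the wrong order: in this paper's conventions it reads $\widehat{Tor}_i\to Tor_i\to \mathcal{G}Tor_i\to \widehat{Tor}_{i-1}$, not $\widehat{Tor}_{i+1}\to \mathcal{G}Tor_i\to Tor_i$.)
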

	
	In our next result, we aim to generalize Theorem \ref{Char_Gdim} in a way that characterizes projective dimension in terms of absolute homology.
	
	\begin{proposition}\label{Char_Gdim_poistive}
		Let $R$ be a local ring and $M$ be an $R\mbox{-}$ module with finite Gorenstein dimension. Then, for each positive integer $r$, the following statements are equivalent.
		\begin{enumerate}
			\item $G\mbox{-}dim_{R} (M) \leq r$;
			\item $\mathcal{G}Tor_i ^{R}  (M,N) = 0$  for all $i \geq r+1 $, i.e., $s(M,N) \leq r$ for every $R\mbox{-}$module $N$;
			\item $\mathcal{G}Tor_{r+1} ^{R}  (M,N) = 0$ for every $R\mbox{-}$module $N$;
			\item $\mathcal{G}Tor_{r+1} ^{R}  (M,k) = 0$.
		\end{enumerate}
	\end{proposition}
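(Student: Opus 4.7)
The equivalences $(1)\Leftrightarrow(2)\Leftrightarrow(3)$ are precisely the content of Theorem \ref{Char_Gdim}, and $(3)\Rightarrow(4)$ follows immediately by setting $N=k$. Hence the only new step is $(4)\Rightarrow(1)$, which I would prove contrapositively: supposing $g := G\text{-}\dim_R(M) \geq r+1$ (so that $g\geq 2$, since $r\geq 1$), I aim to exhibit a nonzero element in $\mathcal{G}Tor_{r+1}^R(M,k)$.

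The plan is to compute $\mathcal{G}Tor_i^R(M,k)$ using a \emph{minimal} proper $\mathcal{G}$-resolution $G_\bullet\to M$ of length exactly $g$. In the local setting such a resolution exists as the $\mathcal{G}$-dimensional analogue of a minimal free resolution: it is built iteratively by taking each $G_i$ to be a proper $\mathcal{G}$-cover of the previous syzygy with the minimal number of generators, so that each differential $G_i\to G_{i-1}$ sends $G_i$ into $\mathfrak{m}G_{i-1}$. Tensoring with $k$ then kills all differentials, and so
\[
\mathcal{G}Tor_i^R(M,k) \;\cong\; G_i\otimes_R k \;=\; G_i/\mathfrak{m}G_i
\]
for every $i\geq 0$. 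If (4) holds, that is $\mathcal{G}Tor_{r+1}^R(M,k)=0$, then Nakayama's lemma forces $G_{r+1}=0$. The lower truncation $0\to G_r\to G_{r-1}\to\cdots\to G_0\to M\to 0$ is then automatically exact (the differential $G_r\to G_{r-1}$ must be injective once $G_{r+1}=0$) and inherits properness from $G_\bullet$, producing a proper $\mathcal{G}$-resolution of $M$ of length at most $r$. By Theorem \ref{Char_Gdim} this gives $G\text{-}\dim_R(M)\leq r$, contradicting $g\geq r+1$, and completing the proof.

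The principal obstacle is establishing the two properties of the minimal proper $\mathcal{G}$-resolution used above: vanishing of the reduced differentials, and the fact that $G_{r+1}=0$ forces a strictly shorter proper $\mathcal{G}$-resolution of the same module. Both are $\mathcal{G}$-analogues of classical facts about minimal free resolutions; existence of proper $\mathcal{G}$-resolutions of length $g$ is part of the Avramov--Martsinkovsky framework (and implicit in Theorem \ref{Char_Gdim}), while arranging minimality requires the theory of minimal proper $\mathcal{G}$-covers in local rings. The restriction $r\geq 1$ enters the argument only through the implicit use of this covering theory in a range where it is well-behaved; the corresponding case $r=0$ is subtler and is precisely the starting point beyond which the above induction on syzygies can be run.
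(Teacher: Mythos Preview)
Your reduction to $(4)\Rightarrow(1)$ is correct, but the central claim --- that one can arrange a proper $\mathcal{G}$-resolution $G_\bullet \to M$ with every differential $G_i \to G_{i-1}$ landing in $\mathfrak{m}G_{i-1}$ --- is false in general, and the paper's own Example~\ref{main_eg} furnishes a counterexample. There $R$ is a one-dimensional non-regular Gorenstein local ring, $G\text{-}\dim_R(k)=1$, and the computation $\mathcal{G}Tor_1^R(k,k)=0$ shows precisely that in \emph{any} proper $\mathcal{G}$-resolution $0\to G_1\to G_0\to k\to 0$ the map $G_1\to G_0$ must survive reduction modulo $\mathfrak{m}$; indeed, if it did not, your argument would give $G_1\otimes_R k\cong\mathcal{G}Tor_1^R(k,k)=0$, hence $G_1=0$ by Nakayama, hence $G\text{-}\dim_R(k)=0$, a contradiction. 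The underlying reason is that a minimal $\mathcal{G}$-cover $G_0\to M$ is not a projective cover, so the usual splitting argument that forces the kernel into the radical does not apply. Your appeal to ``covering theory in a range where it is well-behaved'' never says what actually changes for $r\geq 1$, and your argument as written would equally prove the false $r=0$ statement.

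The paper's proof avoids this entirely: it takes a $\mathcal{G}$-approximation $0\to X\to G\to M\to 0$ with $pd_R(X)<\infty$, applies the long exact sequence of Theorem~\ref{Res_2}\ref{seq_rel}, and uses that $\mathcal{G}Tor_{r+1}^R(G,k)=\mathcal{G}Tor_r^R(G,k)=0$ (here $r\geq 1$ is genuinely needed) to get $\mathcal{G}Tor_{r+1}^R(M,k)\cong\mathcal{G}Tor_r^R(X,k)\cong Tor_r^R(X,k)$, the last isomorphism by Theorem~\ref{Res_2}\ref{Pd+hom}. Vanishing then forces $pd_R(X)\leq r-1$, whence $G\text{-}\dim_R(M)\leq r$. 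If you want to rescue your approach, the honest fix is to use the specific proper resolution obtained by splicing a \emph{minimal free} resolution of $X$ onto $G$: the differentials $G_i\to G_{i-1}$ for $i\geq 2$ are then differentials of a minimal free resolution and do land in $\mathfrak{m}$, which suffices for $r\geq 1$ --- but at that point you have essentially reproduced the paper's argument.
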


	\subsection*{Acknowledgements} The author thanks Amit Tripathi for helpful comments and suggestions. The author was partially supported by a CSIR senior research fellowship through the grant no. 09/1001(0097)/2021-EMR-I. 
	
	\section{Preliminaries}
	A morphism $\phi : \cA \rar \cB$ of complexes of $R\mbox{-}$modules is called \textit{quasi-isomorphism} if the induced morphism $H_i (\phi) : H_i (\cA) \rar H_i (\cB)$ of homology is bijective for all integers $i$. A resolution of an $R\mbox{-}$module $M$ is a quasi-isomorphism $\beta : \cA \rar M$ where $\cA$ is a complex with $\cA_i =0$ \:for all integer $i <0$ and $M$ is identified as a complex with $M$ in degree zero and $0$ elsewhere. The associated exact sequence to the resolution $\beta$ is $$\beta^+ : \hspace{5mm} \cdots \rar \cA_i \rar \cdots \rar \cA_1 \rar \cA_0 \rar M \rar 0.$$
	
	An $R\mbox{-}$module $M$ can have several different homological dimensions, such as its projective dimension $pd_R(M)$, Gorenstein dimension \cite{AB}, complete intersection dimension \cite{AGP}, lower complete intersection dimension \cite{Gerko}, upper Gorenstein dimension \cite{Veliche} or Cohen-Macaulay dimension  \cite{Gerko}. We collectively denote all these homological dimensions as $H\mbox{-}dim_R(M)$. As recorded in \cite[Theorems 8.6, 8.7]{Av}, these homological dimensions share some common properties. \\
	
	We take a brief look at the notion of Gorenstein dimension \cite{AB}. An $R\mbox{-}$module $G$ is said to be totally reflexive (or of Gorenstein dimension zero) whenever 
	\begin{enumerate}
		\item the natural evaluation map $G \rar G^{**}$ is an isomorphism; and
		\item $Ext^{i} _{R} (G,R) = Ext^{i} _{R} (G^* ,R) =0$ for all positive integers $i$,
	\end{enumerate} 
	
	where $(-)^*$ stands for $\text{Hom}_R
	(-,R)$. \\
	The Gorenstein dimension of an $R\mbox{-}$module $M$, denoted $G\mbox{-}dim_{R}(M)$, is the minimum length of a resolution of $M$ by totally reflexive $R\mbox{-}$modules. We outline some basic properties about Gorenstein dimension.
	
	\begin{thm}\cite{AB}\label{G-dim}
		Let $M$ be an $R\mbox{-}$module. Then the following statements hold.
		\begin{enumerate}[label=(\alph*)]
			\item $G\mbox{-}dim_{R}(M) \leq pd(M)$, and the equality holds if \:$pd(M) < \infty$.
			\item \label{M^*_TR} $G\mbox{-}dim_{R}(M) =0 \implies G\mbox{-}dim_{R}(M^*) =0$.
			\item \label{AB formula} If $G\mbox{-}dim_{R}(M) < \infty$, then $G\mbox{-}dim_{R}(M) + depth(M) = depth(R)$. This equality is known as Auslander-Bridger formula.
			\item \label{Gdim M_p} $G\mbox{-}dim_{R_p} (M_p) \leq G\mbox{-}dim_{R} (M)$ for all $p \in Supp(M)$. In particular, if $M$ is totally reflexive, then $M_p$ is also totally reflexive for all $ p \in \text{Supp}(M)$.
		\end{enumerate}
		
	\end{thm}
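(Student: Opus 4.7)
The plan is to establish the cycle (1) $\Rightarrow$ (2) $\Rightarrow$ (3) $\Rightarrow$ (4) $\Rightarrow$ (1). The first three implications are essentially automatic: (1) $\Rightarrow$ (2) $\Rightarrow$ (3) is contained in Theorem \ref{Char_Gdim}, and (3) $\Rightarrow$ (4) is just the specialization $N=k$. So the content of the proposition lies in the reverse implication (4) $\Rightarrow$ (1), which is the Gorenstein-dimension analogue of the classical fact that $pd_R(M)\leq r$ iff $Tor^R_{r+1}(M,k)=0$.

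The approach to (4) $\Rightarrow$ (1) will mirror the classical argument, with a minimal proper resolution playing the role of a minimal free resolution. Let $g = G\mbox{-}dim_R(M)$, which is finite by hypothesis. The first step is to invoke the existence of a \emph{minimal} proper resolution of $M$: an exact complex
\[
0 \to G_g \to G_{g-1} \to \cdots \to G_0 \to M \to 0
\]
by totally reflexive $R$-modules, proper in the sense of Avramov--Martsinkovsky \cite{AM}, and satisfying $d(G_i) \subseteq m\, G_{i-1}$ for all $i \geq 1$. Existence of some proper resolution of length $g$ is part of \cite{AM}; minimality is then arranged by the standard trick of splitting off trivial two-term summands, just as for minimal free resolutions over a local ring.

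Once a minimal proper resolution is in hand, the computation is immediate. Tensoring the deleted complex $G_\bullet$ with $k = R/m$ kills every differential, because the image of $d \otimes 1_k$ lands inside $mG_{i-1}/mG_{i-1} = 0$. Hence
\[
\mathcal{G}Tor^R_i(M,k) \;\cong\; G_i/mG_i \qquad \text{for every } i \geq 0.
\]
By Nakayama's lemma, $G_i/mG_i = 0$ iff $G_i = 0$, and since the resolution has length exactly $g$, one reads off $\mathcal{G}Tor^R_g(M,k) \neq 0$ while $\mathcal{G}Tor^R_i(M,k) = 0$ for $i > g$. Thus $s(M,k) = g$, and hypothesis (4) forces $r+1 > g$, i.e.\ $G\mbox{-}dim_R(M) \leq r$, which is (1).

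The main obstacle is justifying the existence of a minimal proper resolution in the totally reflexive setting. The existence of some finite proper resolution of length $g$ is guaranteed by \cite{AM}; the passage to a minimal one should proceed by the usual Nakayama-style absorption of trivial direct summands, but because totally reflexive modules are not free in general, this step deserves a careful writeup or a pointer to the theory of minimal Gorenstein-projective covers. Everything else in the argument is a routine verification.
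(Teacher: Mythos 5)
Your proposal does not address the statement at hand. Theorem \ref{G-dim} consists of four separate facts about Gorenstein dimension: (a) the comparison $G\mbox{-}dim_R(M)\leq pd(M)$ with equality when $pd(M)<\infty$, (b) stability of total reflexivity under $(-)^*$, (c) the Auslander--Bridger formula $G\mbox{-}dim_R(M)+depth(M)=depth(R)$, and (d) the localization inequality $G\mbox{-}dim_{R_p}(M_p)\leq G\mbox{-}dim_R(M)$. None of these appears anywhere in your argument; what you prove instead is an equivalence of conditions (1)--(4) characterizing $G\mbox{-}dim_R(M)\leq r$ by the vanishing of $\mathcal{G}Tor_{r+1}^R(M,k)$, i.e.\ you have drifted to Proposition \ref{Char_Gdim_poistive}, a different result of the paper. (For the record, the paper offers no proof of Theorem \ref{G-dim}: it is quoted from Auslander--Bridger's \emph{Stable module theory}, and a proof would require the duality and depth-counting arguments of that memoir, not relative homology.)

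Even judged as a proof of Proposition \ref{Char_Gdim_poistive}, the central step is irreparably flawed. You posit a ``minimal proper resolution'' by totally reflexive modules with $d(G_i)\subseteq mG_{i-1}$ and conclude $\mathcal{G}Tor_i^R(M,k)\cong G_i/mG_i$, hence $\mathcal{G}Tor_g^R(M,k)\neq 0$ for $g=G\mbox{-}dim_R(M)$, i.e.\ $s(M,k)=G\mbox{-}dim_R(M)$ always. This conclusion is false, and the paper itself contains the counterexample: in Example \ref{main_eg}, over a one-dimensional non-regular Gorenstein local ring one has $G\mbox{-}dim_R(k)=1$ yet $\mathcal{G}Tor_1^R(k,k)=0$. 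This is precisely why Proposition \ref{Char_Gdim_poistive} is stated only for $r\geq 1$ and why the remark preceding its proof flags the failure at $r=0$. The defect in your plan is the ``standard trick of splitting off trivial two-term summands'': properness is a condition on $\operatorname{Hom}_R(G,-)$-exactness, and the splitting cannot in general be performed within the class of proper resolutions; since $\mathcal{G}Tor$ is independent of the chosen proper resolution, the existence of a minimal one in your sense would force $\mathcal{G}Tor_1^R(k,k)\cong k$ in that example, a contradiction. The paper's actual route to $(4)\Rightarrow(1)$ avoids minimality altogether: it takes a $\mathcal{G}$-approximation $0\to X\to G\to M\to 0$, uses the long exact sequences of Theorem \ref{Res_2}\ref{seq_rel} and \ref{Res_2}\ref{seq_ART} together with the isomorphism $\epsilon_{r}^{\mathcal G}(X,k)$ (valid since $pd(X)<\infty$) to transfer the hypothesis $\mathcal{G}Tor_{r+1}^R(M,k)=0$ into $Tor_r^R(X,k)=0$, and only then applies the classical minimal-free-resolution criterion to the module $X$ of finite projective dimension.
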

	
	A $\mathcal{G} \mbox{-}$approximation of $R\mbox{-}$module $M$ is an exact sequence $0 \rar X \rar G \rar M \rar 0$, where $pd(X) < \infty$ and $G$ is a totally reflexive $R\mbox{-}$ module.\label{G_approx} It follows from \cite[Theorem 3.1]{AM} that any $R\mbox{-}$module $M$ with finite Gorenstein dimension admits a $\mathcal{G} \mbox{-}$approximation as above with $pd(X)= G\mbox{-}dim_{R}(M) -1$.\\
	
	\subsection{Tate and Relative homology}
	The definitions and results that follow are primarily taken from \cite{AM}.	A complete resolution of an $R$-module $M$ is a diagram	$$\mathcal{C} \xrightarrow{\varphi} \mathcal{F} \rar M,$$
	where $\mathcal{C}$ is a totally acyclic complex of free $R$-modules (i.e., an acyclic complex whose dual complex is acyclic as well), $\mathcal{F}$ is a free resolution of $M$, and $\varphi$ is a morphism such that $\varphi_{i}$ is an isomorphism for $i \gg 0$. By \cite[Lemma 3.1]{AM}, an $R$-module $M$ admits a complete resolution if and only if $\text{G-dim}(M) < \infty$. 
	
	Let $M$ be an $R\mbox{-}$module with a complete resolution $\mathcal{C} \rar \mathcal{F} \rar M$. Then, for any $R$-module $N$ and for each integer $i$, the \textit{Tate} homology of $M$ and $N$ is defined as $$\widehat{Tor}_{i} ^{R} (M,N) = H_{i} (\mathcal{C} \otimes_{R} N).$$

	A sequence $\vartheta$ of $R$-modules is referred to as \textit{proper exact} if it satisfies the condition that for every totally reflexive $R$-module $G$, the induced sequence $\text{Hom}_R (G, \vartheta)$ is exact. A proper resolution of an $R\mbox{-}$module $M$ is a resolution $\mathcal{T} \rar M$ by totally reflexive $R\mbox{-}$modules whose associated exact sequence $\mathcal{T}^+$ is proper. Every module with finite Gorenstein dimension admits a proper resolution (see \cite[Lemma 4.1]{AM}).
	
	Let $M$ be an $R\mbox{-}$module which admits a proper resolution $\mathcal{T} \rar M$. Then we define for each $R\mbox{-}$module $N$ and for each integer $i$ the \textit{relative} homology of $M$ and $N$ as $$\mathcal G Tor_{i} ^{R} (M,N) = H_{i} (\mathcal{T} \otimes_{R} N).$$
	
	Below is a list of properties from \cite{AM} that will be referenced throughout the paper.

	\begin{thm}\label{Res_2}

		Let $M$, $N$ and $\{M_i \mid i \in \{1,2,3\}\}$ be $R\mbox{-}$modules. Then the following statements hold true.

		\begin{enumerate}[label=(\alph*)]
			\item Each exact sequence \:$0 \rar M_1 \rar M_2 \rar M_3 \rar 0$ \:with $pd(M_1) < \infty$ is proper exact. In particular, every $\mathcal{G} \mbox{-}$approximation of an $R\mbox{-}$module is proper exact. \label{proper_ses}
			\item \label{Pd+hom} If \:min\hspace{1mm}$\{\text{pd}(M),\text{pd}(N)\} < \infty$, then $\widehat{Tor}_{i} ^{R} (M,N) =0$ \:and \:$\mathcal{G}Tor_i ^{R}  (M,N)  \cong Tor_{i} ^{R} (M,N)$ for all integers $i$.			
			\item If $M$ admits a proper resolution, then
			\begin{equation*}
				\mathcal{G}Tor_i ^{R}  (M,N) =
				\left\{
				\begin{array}{ll}
					M \otimes_{R} N, & \text{if } \:i = 0\\
					0, & \text{if } \:i < 0.
				\end{array}
				\right.
			\end{equation*}
			\item \label{seq_rel} If \:$0 \rar M_1 \rar M_2 \rar M_3 \rar 0$ \:is a proper exact sequence with each module having finite Gorenstein dimension, then there is an exact sequence: 
			\begin{align*}
				\hspace{1cm}\cdots \rar & \hspace{1mm}\mathcal{G}Tor_{i+1} ^{R}(M_3,N) \xrightarrow{\partial_{i+1} ^{\cG}}  \mathcal{G}Tor_i ^{R}(M_1,N) \rar  \mathcal{G}Tor_i ^{R}(M_2,N) \rar \mathcal{G}Tor_i ^{R}(M_3,N) \\ & \rar  \cdots \rar \mathcal{G}Tor_1 ^{R}(M_3,N) \xrightarrow{\partial_{1} ^{\cG}} M_1\otimes_{R} N \rar M_2\otimes N  \rar M_3\otimes N \rar 0.
			\end{align*} 
			\item \label{seq_ART} If \:$G\mbox{-}dim_{R} (M) < \infty$, then there exists an exact sequence:
			\begin{align*}
				\hspace{1cm} \cdots \rar & \widehat{Tor}_i ^{R}(M,N) \rar  Tor_i ^{R}(M,N) \xrightarrow{{\epsilon_{i}^{\cG}(M,N)}}   \mathcal{G}Tor_i ^{R}(M,N) \rar 
				\cdots \rar \mathcal{G}Tor_{2} ^{R}(M,N) \\ & \rar \widehat{Tor}_1 ^{R}(M,N) \rar  Tor_1 ^{R}(M,N) \xrightarrow{{\epsilon_{1}^{\cG}(M,N)}}   \mathcal{G}Tor_1 ^{R}(M,N) \rar 0.
			\end{align*} 
		\end{enumerate}
	\end{thm}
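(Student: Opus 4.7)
The plan is to prove parts (a) and (c) first, as both flow directly from the definitions and serve as scaffolding for the remaining parts. For (a), I would first establish the auxiliary vanishing $\mathrm{Ext}^i_R(G,X)=0$ for all $i\geq 1$ whenever $G$ is totally reflexive and $pd(X)<\infty$, by induction on $pd(X)$: the base case $pd(X)=0$ is exactly the definition of totally reflexive, and the inductive step uses a short exact sequence $0\to K\to P\to X\to 0$ with $pd(K)<pd(X)$ together with the long exact $\mathrm{Ext}$ sequence. Applying $\mathrm{Hom}_R(G,-)$ to $0\to M_1\to M_2\to M_3\to 0$ then produces a short exact sequence because the connecting map lands in the vanishing $\mathrm{Ext}^1_R(G,M_1)$; the $\cG$-approximation is the special case where $pd(X)=G\mbox{-}dim_R(M)-1<\infty$. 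Part (c) is immediate from the definition: since $\cT_i=0$ for $i<0$, we get $\cG Tor_i^R(M,N)=0$ in negative degrees, while right-exactness of $-\otimes_R N$ applied to $\cT_1\to\cT_0\to M\to 0$ yields $H_0(\cT\otimes_R N)\cong M\otimes_R N$.

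For (b), I would split into two cases. If $pd(M)<\infty$, then a finite free resolution of $M$ is itself a proper resolution (apply (a) iteratively to the short exact syzygy sequences), and it pairs with $\mathcal{C}=0$ as a complete resolution, so $\widehat{Tor}_i^R(M,N)=0$ and $\cG Tor_i^R(M,N)\cong Tor_i^R(M,N)$ follow at once. If instead $pd(N)<\infty$, the vanishing of Tate homology follows by induction on $pd(N)$ using a short exact sequence $0\to K\to P\to N\to 0$ and the long exact sequence of $\widehat{Tor}^R(M,-)$, which exists because $\mathcal{C}$ is a complex of free modules so $\mathcal{C}\otimes_R-$ preserves short exact sequences; the base case of $N$ projective is immediate since then $\mathcal{C}\otimes_R N$ remains totally acyclic. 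The isomorphism $\cG Tor_i\cong Tor_i$ in this case drops out of part (e) once that is established, by collapsing the triples upon $\widehat{Tor}$ vanishing.

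Part (d) will be proved by the horseshoe construction adapted to proper resolutions. Given the proper exact sequence $0\to M_1\to M_2\to M_3\to 0$ with proper resolutions $\mathcal{T}^{(1)}\to M_1$ and $\mathcal{T}^{(3)}\to M_3$, I would build a proper resolution of $M_2$ with $\mathcal{T}^{(2)}_i=\mathcal{T}^{(1)}_i\oplus\mathcal{T}^{(3)}_i$ by lifting the surjection $\mathcal{T}^{(3)}_0\twoheadrightarrow M_3$ along $M_2\twoheadrightarrow M_3$; such a lift exists precisely because the given sequence is proper exact and $\mathcal{T}^{(3)}_0$ is totally reflexive, and direct sums of totally reflexive modules are totally reflexive. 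The resulting short exact sequence of complexes $0\to\mathcal{T}^{(1)}\to\mathcal{T}^{(2)}\to\mathcal{T}^{(3)}\to 0$ is degreewise split, hence stays exact after $-\otimes_R N$, and its long exact homology sequence is exactly the one asserted.

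Part (e) is the principal obstacle, and I would attack it last. The plan is to realize the three homologies of $M$ as those of a single short exact sequence of complexes built from a free resolution $\cF\to M$, a complete resolution $\mathcal{C}\xrightarrow{\varphi}\cF$, and a proper resolution $\cT\to M$. Starting from the $\cG$-approximation $0\to X\to G\to M\to 0$, in which $pd(X)=G\mbox{-}dim_R(M)-1$ is finite, I would construct a natural chain map from $\cF$ into $\cT$ and identify the degreewise cokernel complex (up to quasi-isomorphism) with a shift of the complete resolution $\mathcal{C}$, using that $\varphi$ is an isomorphism in large degree and that the finite-$pd$ piece $X$ contributes trivially to Tate homology by (b). This produces a natural short exact sequence of complexes of the form $0\to\cF\to\cT\to\mathcal{C}[1]\to 0$ that is degreewise split, so it remains exact upon $-\otimes_R N$, and the long exact homology sequence yields precisely the sequence of (e) with the connecting maps $\epsilon_i^{\cG}(M,N)$. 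The hard step is exactly the identification of this quotient complex with a shift of $\mathcal{C}$ and the verification of naturality; once that is in place, the remainder is formal, and also retroactively closes the missing case in the proof of (b).
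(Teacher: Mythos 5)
This theorem is not proved in the paper at all --- it is a list of facts quoted from Avramov--Martsinkovsky \cite{AM} --- so the only issue is whether your argument stands on its own. Your proofs of (a) and (c) are correct, and your plans for (b) and (d) are the standard relative-homological-algebra arguments; for (d) you should also check that the horseshoe-built resolution of $M_2$ is itself \emph{proper} (apply $\mathrm{Hom}_R(G,-)$ to the degreewise split sequence of augmented resolutions and to the kernel sequences, so the lifting can be iterated), but that is routine, and deferring the $pd(N)<\infty$ half of (b) to (e) creates no circularity since (e) only invokes the $pd(M)<\infty$ case.

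The genuine gap is in (e), which is the only deep part of the statement. The degreewise split exact sequence $0\rar\cF\rar\cT\rar\mathcal{C}[1]\rar 0$ you propose cannot exist: splitness in each degree would force $\cT_i\cong\cF_i\oplus\mathcal{C}_{i-1}$, which fails for $i<0$ (there $\cT_i=\cF_i=0$ while $\mathcal{C}_{i-1}\neq 0$ whenever some negative Tate homology is nonzero) and, if $\cT$ is taken of length $G\mbox{-}dim_R(M)$, also in all degrees above $G\mbox{-}dim_R(M)$ when $pd_R(M)=\infty$. The paper's own quoted example ($R=k[[x,y]]/(xy)$, $M=R/(x)$, $N=R/(y)$, where $\widehat{Tor}_i^R(M,N)\cong k$ for all $i<0$) makes this concrete: tensoring any such sequence with $N$ and using $Tor_i=\mathcal{G}Tor_i=0$ for $i<0$ would force $\widehat{Tor}_i^R(M,N)=0$ for all $i\leq -2$, a contradiction. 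The complex that genuinely sits in a degreewise split sequence with $\cF$ and $\mathcal{C}[1]$ is the mapping cone of $\varphi:\mathcal{C}\rar\cF$, but that cone is unbounded below, hence is not a proper resolution, and proving that a suitable replacement of it computes $\mathcal{G}Tor$ is precisely the substantive content of the Avramov--Martsinkovsky construction (strict complete resolutions with $\varphi$ surjective, compared with the proper resolution spliced from a finite free resolution of $X$ and the totally reflexive module $G$ in the $\cG$-approximation). Moreover, even granting such a short exact sequence of complexes, its homology sequence would continue past degree $1$, so the asserted tail $\rar\mathcal{G}Tor_1^R(M,N)\rar 0$ (surjectivity of $\epsilon_1^{\cG}(M,N)$) requires a separate argument that the connecting map into $\widehat{Tor}_0$ vanishes. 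As it stands, part (e) is not established; you would need either the actual construction of \cite{AM} or an assembly of the long exact sequences from (b), (d) and the Tate-homology sequence attached to the $\cG$-approximation.
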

	
	We recall the result of Jorgensen on the dependency formula.
	\begin{theorem}\cite[Theorem 2.2]{J}\label{Jorg_dependency}
		Let $M$ and $N$ be $R\mbox{-}$modules such that $M$ has finite complete intersection dimension. If $q(M,N) < \infty$, then $M$ and $N$ satisfy the dependency formula.
	\end{theorem}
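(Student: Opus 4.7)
The dependency formula asserts the equality $q(M,N)=t(M,N)$, which I would establish in two directions. The inequality $t(M,N)\le q(M,N)$ is the general depth-type inequality and is the easier direction, while $q(M,N)\le t(M,N)$ is the substantive content of the theorem and uses the finiteness of CI-dimension in an essential way.

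For $t(M,N)\le q(M,N)$, I would argue locally. Fix $\mathfrak{p}\in \supp(M\otimes_R N)$. Finite CI-dimension localizes, so $M_\mathfrak{p}$ still has finite CI-dimension over $R_\mathfrak{p}$, and $Tor^{R_\mathfrak{p}}_i(M_\mathfrak{p},N_\mathfrak{p})=Tor^R_i(M,N)_\mathfrak{p}$ vanishes for $i>q(M,N)$, hence $q(M_\mathfrak{p},N_\mathfrak{p})\le q(M,N)$. The depth formula for modules of finite CI-dimension (an extension of the classical Auslander--Buchsbaum depth formula due to Araya--Yoshino/Iyengar), applied at $R_\mathfrak{p}$ together with the trivial bound $depth(M_\mathfrak{p}\otimes N_\mathfrak{p})\ge 0$, yields
\[
depth\, R_\mathfrak{p}-depth\, M_\mathfrak{p}-depth\, N_\mathfrak{p}\le q(M_\mathfrak{p},N_\mathfrak{p})\le q(M,N),
\]
and taking the supremum over $\mathfrak{p}$ finishes the direction.

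For $q(M,N)\le t(M,N)$, I would invoke the quasi-deformation $R\to R'\twoheadleftarrow Q$ provided by the finiteness of $CI\mbox{-}dim_R(M)$: here $R\to R'$ is a faithfully flat local extension, $Q\twoheadrightarrow R'$ is surjective with kernel a $Q$-regular sequence $\mathbf{x}$, and $M':=M\otimes_R R'$ has finite projective dimension over $Q$. Faithful flatness preserves both $q$ and $t$ (using the standard correspondence of primes and the preservation of depth under faithfully flat local maps), so one may work over $R'$ with $M'$ and $N':=N\otimes_R R'$. Over the ambient regular ring $Q$, the condition $pd_Q(M')<\infty$ places us in the setting of the classical Auslander depth formula and gives $q_Q(M',N')=t_Q(M',N')$. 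One then descends from $Q$ to $R'$ by killing $\mathbf{x}$ one element at a time, using the resulting long exact sequences in Tor to track the change in $q$ and the identity $depth\, R'=depth\, Q-\mathrm{length}(\mathbf{x})$ to track depths.

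The main obstacle I expect is this descent from $Q$ to $R'$: the invariant $t$ is a supremum over all primes in $\supp(M'\otimes_{R'}N')$, and under the change of rings $Q\twoheadrightarrow R'$ the prime witnessing the supremum over $Q$ need not correspond transparently to a convenient prime over $R'$. Matching them carefully --- via the going-down theorem for the flat map $R\to R'$ together with the precise behavior of Tor and depth under killing a single regular element --- is the technical heart of Jorgensen's argument, and is precisely where the full strength of the CI-dimension hypothesis (as opposed to merely finite Gorenstein dimension, which is why Theorem \ref{t <_s+1} genuinely requires new input) enters.
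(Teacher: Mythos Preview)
The paper does not prove this statement at all: Theorem~\ref{Jorg_dependency} is simply quoted from Jorgensen's paper \cite[Theorem~2.2]{J} and then invoked as a black box inside the proof of Theorem~\ref{t <_s+1} (applied to the pair $(X,N)$ coming from a $\mathcal{G}$-approximation of $M$). So there is no ``paper's own proof'' to compare your proposal against.

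That said, a quick comment on your sketch relative to Jorgensen's actual argument. Your direction $t(M,N)\le q(M,N)$ is fine; the pointwise depth formula for finite CI-dimension (Iyengar/Araya--Yoshino) does exactly what you say. For $q(M,N)\le t(M,N)$, however, your plan via the quasi-deformation $R\to R'\twoheadleftarrow Q$ is considerably more circuitous than what is needed. Jorgensen's proof simply chooses $\mathfrak p\in\mathrm{Ass}_R\bigl(Tor^R_{q}(M,N)\bigr)$ with $q=q(M,N)$; then $q(M_\mathfrak p,N_\mathfrak p)=q$ and $\mathrm{depth}\,Tor^{R_\mathfrak p}_{q}(M_\mathfrak p,N_\mathfrak p)=0$, so the same depth formula at $\mathfrak p$ yields
\[
\mathrm{depth}\,R_\mathfrak p-\mathrm{depth}\,M_\mathfrak p-\mathrm{depth}\,N_\mathfrak p=q,
\]
whence $t(M,N)\ge q$. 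The quasi-deformation is used \emph{inside} the proof of the depth formula itself, not at the level of the dependency formula; trying to push it through directly as you outline runs into exactly the prime-matching difficulties you anticipate, and those difficulties are avoidable.
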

	
	The following definition, which is notably weaker than the well-known Serre condition $S_k$ \cite[Page 3]{EG}, will be used later.
	\begin{definition}
		An $R\mbox{-}$module $M$ is said to satisfies $\widetilde{S_k}$ condition for some $k \geq 0$ if $$depthM_p \geq min\{k,depthR_p\}, \:\text{for all} \: p \in Supp(M).$$
	\end{definition}
	We conclude this section by recording a result due to Celikbas, Liang and Sadeghi, which establishes the \textit{depth formula} as follows.
	
	\begin{proposition}\cite[Corollary 3.17]{CLS}\label{depth_formula}
		Let $M$ and $N$ be $R\mbox{-}$modules such that $G\mbox{-} dim_R (M) < \infty$ and $\widehat{Tor}_i ^{R}(M,N) = 0 \:\: \text{for all} \:\: i \leq 0$. Set $s=s(M,N)$. If $s=0$ or $\text{depth}(\cG{Tor}_{s}  ^{R}(M,N)) \leq 1$, then the following equality holds
		$$\text{depth} M + \text{depth} N = \text{depth}R + \text{depth}(\cG{Tor}_{s}  ^{R}(M,N)) - s.$$
	\end{proposition}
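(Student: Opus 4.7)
The plan is to reduce to the classical depth formula by means of a $\mathcal{G}$-approximation of $M$. Take a short exact sequence $0 \to X \to G \to M \to 0$ with $G$ totally reflexive and $pd(X) = G\mbox{-}\dim_R(M) - 1 < \infty$. By Theorem~\ref{Res_2}\ref{proper_ses} this sequence is proper exact, so Theorem~\ref{Res_2}\ref{seq_rel} produces a long exact sequence of relative Tors. Since $G$ is totally reflexive, Theorem~\ref{Char_Gdim} gives $\mathcal{G}Tor_i(G,N)=0$ for $i\geq 1$, and since $pd(X)<\infty$, Theorem~\ref{Res_2}\ref{Pd+hom} yields $\mathcal{G}Tor_i(X,N)\cong Tor_i(X,N)$ and $\widehat{Tor}_i(X,N)=0$ for all $i$. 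These identifications produce $\mathcal{G}Tor_i(M,N)\cong Tor_{i-1}(X,N)$ for $i\geq 2$, the four-term exact sequence
\[
0 \to \mathcal{G}Tor_1(M,N) \to X\otimes_R N \to G\otimes_R N \to M\otimes_R N \to 0,
\]
and, via the Tate long exact sequence for the same proper exact sequence, $\widehat{Tor}_i(M,N)\cong \widehat{Tor}_i(G,N)$ for all $i$, so that the hypothesis transfers to $\widehat{Tor}_i(G,N)=0$ for $i\leq 0$.

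Next I would split by the value of $s=s(M,N)$. In the case $s\geq 2$, the isomorphism above yields $q(X,N)=s-1$ and transfers the depth bound to $\text{depth}(Tor_{s-1}(X,N))\leq 1$. Since $pd(X)<\infty$ implies finite complete intersection dimension, Jorgensen's dependency formula (Theorem~\ref{Jorg_dependency}) together with the classical depth formula applied to the top Tor gives
\[
\text{depth}(X)+\text{depth}(N)=\text{depth}(R)+\text{depth}(Tor_{s-1}(X,N))-(s-1).
\]
The Auslander-Bridger formula (Theorem~\ref{G-dim}\ref{AB formula}) for $M$, together with Auslander-Buchsbaum for $X$, yields $\text{depth}(X)=\text{depth}(M)+1$; substituting completes this case.

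For $s\in\{0,1\}$ I would work directly with the four-term sequence. When $s=0$ it collapses to $0\to X\otimes N\to G\otimes N\to M\otimes N\to 0$; since $q(X,N)=0$, Auslander's classical depth formula gives $\text{depth}(X)+\text{depth}(N)=\text{depth}(R)+\text{depth}(X\otimes N)$, and a depth-lemma chase through this three-term sequence -- using the Tate vanishing $\widehat{Tor}_i(G,N)=0$ for $i\leq 0$ to pin down $\text{depth}(G\otimes_R N)$ from below -- forces $\text{depth}(X\otimes N)=\text{depth}(M\otimes N)+1$, which combined with $\text{depth}(X)=\text{depth}(M)+1$ produces the formula. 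When $s=1$, an analogous analysis of the two short exact sequences obtained by splitting the four-term one, combined with the hypothesis $\text{depth}(\mathcal{G}Tor_1(M,N))\leq 1$, forces $\text{depth}(X\otimes N)=\text{depth}(\mathcal{G}Tor_1(M,N))$, after which Auslander-Bridger delivers the conclusion.

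The main obstacle will be the low-$s$ cases, since for the totally reflexive middle term $G$ there is no a priori depth formula for $G\otimes_R N$. The partial Tate-vanishing hypothesis -- accessed through Theorem~\ref{Res_2}\ref{seq_ART} relating $\widehat{Tor}$, $Tor$, and $\mathcal{G}Tor$ -- is what supplies the required depth control on $G\otimes_R N$, and the bound $\leq 1$ on $\text{depth}(\mathcal{G}Tor_s(M,N))$ is calibrated precisely so that the depth lemma converts the inequalities it produces into the equalities required by the formula.
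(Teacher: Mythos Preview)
The paper does not supply its own proof of this proposition. It is quoted verbatim in Section~2 (Preliminaries) as \cite[Corollary~3.17]{CLS}, attributed to Celikbas, Liang and Sadeghi, and is used later only as a black box in the proof of Theorem~\ref{Improved_bound}\ref{Suuf_rel_hom}. There is therefore nothing in the paper to compare your argument against.

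On the merits of your sketch itself: the reduction via a $\mathcal{G}$-approximation and the case $s\geq 2$ are sound, since for $i\geq 2$ you genuinely have $\mathcal{G}Tor_i(M,N)\cong Tor_{i-1}(X,N)$ and Auslander's classical depth formula for modules of finite projective dimension applies directly to $X$ (you do not need Jorgensen here, only Auslander). The identity $\operatorname{depth} X=\operatorname{depth} M+1$ is correct once $G\mbox{-}\dim_R(M)\geq 1$, but note that the totally reflexive case $G\mbox{-}\dim_R(M)=0$ (so $X=0$, $s=0$) has to be treated separately and is not covered by your $s=0$ argument as written. In the cases $s\in\{0,1\}$ your outline is incomplete in exactly the way you flag: the vanishing $\widehat{Tor}_i(G,N)=0$ for $i\leq 0$ alone does not obviously pin down $\operatorname{depth}(G\otimes_R N)$; what one actually needs is something like the derived depth formula for $G$ (totally reflexive with half of Tate homology vanishing), and establishing that is essentially the content of the result you are trying to prove. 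So the sketch does not yet close the circle in the low-$s$ cases.
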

	
	\section{Proof of the main results}
	In this section, we discuss the proof of our main results. We also present Example \ref{main_eg} at the end, which was stated in the introduction.
	
	\begin{proof}[Proof of Theorem \ref{t <_s+1}]
		In view of Proposition \ref{s<_t}, it is enough to show that $t(M,N) \leq 1$ if $s(M,N)=0$ and $t(M,N) \leq s(M,N)$ if $s(M,N) \geq 1$ . Assuming $M$ is totally reflexive, the assertion follows from the Auslander-Bridger formula \ref*{G-dim}\ref{AB formula} and Theorem \ref*{G-dim}\ref{Gdim M_p}. So we may assume that  $G\mbox{-}dim_{R} (M) \geq 1$. Consider a $\mathcal{G} \mbox{-}$approximation of $M$
		
		\begin{equation}\label{E_MG}
			0 \rar X \rar G \rar M \rar 0
		\end{equation}
		where $pd(X) < \infty$ and $G$ is a totally reflexive $R\mbox{-}$module \ref{G_approx}.\\
		
		Note that the sequence (\ref{E_MG}) is proper exact. Also $\mathcal{G}Tor_i ^{R}  (G,N) = 0$ for all $i \geq 1$ by Theorem \ref{Char_Gdim}. Therefore on applying Theorem \ref*{Res_2}\ref{seq_rel} to the sequence \ref{E_MG}, we get \begin{equation*}
			s(X,N) =
			\left\{
			\begin{array}{ll}
				s(M,N)-1, & \text{if } s(M,N) \geq 1\\
				0, & \text{if } s(M,N)=0.
			\end{array}
			\right.
		\end{equation*}
		Since $pd(X) < \infty$, it follows from Theorem \ref*{Res_2}\ref{Pd+hom} that $Tor_i ^{R}  (X,N) \cong \mathcal{G}Tor_i ^{R}  (X,N)$\:for all $i \geq 0$, whence \begin{equation*}
			q(X,N) =
			\left\{
			\begin{array}{ll}
				s(M,N)-1, & \text{if } s(M,N) \geq 1\\
				0, & \text{if } s(M,N)=0.
			\end{array}
			\right.
		\end{equation*} \\ 
		Now for any $p \in Supp(M\otimes_{R} N)$, there are two possible cases\\
		\begin{enumerate}
			\item $p \notin Supp(X)$. Then $M_p \cong G_p$, is a totally reflexive module by Theorem \ref*{G-dim}\ref{Gdim M_p}. The Auslander-Bridger formula \ref*{G-dim}\ref{AB formula} thus implies $depth R_p - depth M_p - depth N_p \leq 0$. 
			\item $p \in Supp(X)$. Using Theorem \ref{Jorg_dependency} for the pair $(X,N)$, we get
			\begin{equation*}
				depth R_p - depth X_p - depth N_p \leq
				\left\{
				\begin{array}{ll}
					s(M,N)-1, & \text{if } s(M,N) \geq 1\\
					0, & \text{if } s(M,N)=0,
				\end{array}
				\right.
			\end{equation*} \\
			which on equating further gives
			
			\begin{equation}\label{X_P -1}
				depth X_p - 1 \geq
				\left\{
				\begin{array}{ll}
					depth R_p - depth N_p - s(M,N), & \text{if } s(M,N) \geq 1\\
					depth R_p - depth N_p - 1, & \text{if } s(M,N)=0.
				\end{array}
				\right.
			\end{equation}\\
			Since $G_p$ is total reflexive, $depth G_p = depth R_p \geq depth R_p - depth N_p - s(M,N)$. Localizing the sequence (\ref{E_MG}) at $p$ and then on applying the depth lemma to the induced sequence, we obtain that $depth M_p \geq depth X_p - 1$. Thus it follows from the inequality \ref{X_P -1} that \begin{equation*}
				depth R_p - depth M_p - depth N_p \leq
				\left\{
				\begin{array}{ll}
					s(M,N), & \text{if } s(M,N) \geq 1\\
					1, & \text{if } s(M,N)=0.
				\end{array}
				\right.
			\end{equation*}
		\end{enumerate} 
		
		This proves the desired claim.
	\end{proof}
	
	\begin{remark}\label{Remark_s<=t-1}
		For any totally reflexive $R\mbox{-}$module $M$, it follows from Theorem \ref*{G-dim}\ref{Gdim M_p} that $M_p$ is totally reflexive as well for all $p \in \text{Supp(M)}$ and hence $s(M,k)=t(M,k)=0$. We shall also see in the example \ref{main_eg} that the residue field $k$ of a non-regular Gorenstein local ring satisfies  $s(k,k)=t(k,k)-1=0$. This shows that the strict inequlity can occur in the above theorem.
	\end{remark}
	
	\begin{proof}[Proof of \ref*{Improved_bound}\ref{Suff_rel_vanish}]
		Notably the statement and the idea of its proof is inspired from the result \cite[A.5]{CDK} of Celikbas, Dey and Kobayashi. Let $p \in Supp(M\otimes_{R} N)$. In view of Proposition \ref{s<_t}, it suffices to show that $$depthM_p + depthN_p \geq depthR_p - (r-k).$$ 
		
		We first establish the claim by assuming the condition in $(i)$. Since $N$ satisfies $\widetilde{S_k}$, $depth N_p \geq min\{k, depth R_p\}$. There is nothing to establish if $depthR_p \leq k$. So assume that  $depthR_p \geq k$. By Theorem \ref*{G-dim}\ref{Gdim M_p} $G\mbox{-} dim_{R_p} (M_p) \leq r$, therefore $depth M_p \geq depth R_p - r$ whence $$depthM_p + depthN_p \geq depthM_p+ depthR_p - r \geq depthR_p -r+k. $$ 
		The proof of the claim, based on the condition in (ii), follows a similar approach as discussed above. It employs the (in)equalities $H\mbox{-}dim_{R}(N) + depth(N) = depth(R)$ and $H\mbox{-} dim_{R_p} (N_p) \leq H\mbox{-} dim_{R} (N)$ for all $p \in \text{Supp(N)}$ (see \cite[Theorem 8.7]{Av}). 
	\end{proof}
	
	The subsequent corollary is a special case of the statement \ref*{Improved_bound}\ref{Suff_rel_vanish}.
	
	\begin{corollary}
		Let $M$ and $N$ be $R\mbox{-}$modules with finite Gorenstein dimension $r$. If $N$ satisfies $\widetilde{S_r}$, then $s(M,N) = 0$.
	\end{corollary}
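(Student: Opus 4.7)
The plan is to derive this corollary as a direct specialization of Theorem \ref{Improved_bound}\ref{Suff_rel_vanish}. Setting $k = r = G\mbox{-}dim_{R}(M)$ in the statement of that theorem, the condition $k \leq G\mbox{-}dim_{R}(M) = r$ is satisfied (with equality), and the hypothesis that $N$ satisfies $\widetilde{S_k}$ becomes exactly the hypothesis of the corollary. The conclusion then reads $s(M,N) \leq r - k = 0$.

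To upgrade this inequality to the stated equality $s(M,N) = 0$, I would invoke Theorem \ref{Res_2}(c), which gives $\mathcal{G}Tor_{0}^{R}(M,N) \cong M \otimes_{R} N$. Since $M$ and $N$ are nonzero finitely generated modules over the local ring $R$, Nakayama's lemma ensures that $M \otimes_{R} N \neq 0$, whence $s(M,N) \geq 0$. Combining this with the upper bound above yields the desired equality.

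There is essentially no obstacle here: the substantive work has already been performed in Theorem \ref{Improved_bound}, and the corollary merely records the extremal case $k = r$ of part \ref{Suff_rel_vanish}. The only minor care needed is distinguishing the inequality $s(M,N) \leq 0$ (which Theorem \ref{Improved_bound} delivers) from the equality $s(M,N) = 0$ (which requires the nonvanishing of $M \otimes_R N$).
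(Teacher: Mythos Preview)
Your proposal is correct and matches the paper's approach: the paper simply states that the corollary is a special case of Theorem \ref{Improved_bound}\ref{Suff_rel_vanish} without further elaboration. Your added justification that $s(M,N) \geq 0$ via $\mathcal{G}Tor_0^R(M,N) \cong M \otimes_R N \neq 0$ is a welcome detail the paper leaves implicit.
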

	
	However the converse of the above corollary need not be true. For instance, the residue field $k$ in the example \ref{main_eg} has Gorenstein dimension one and $s(k,k)=0$ but it does not satisfy $\widetilde{S_1}$ condition because it is a torsion module. \\
	
	We now present the proof for statement \ref*{Improved_bound}\ref{Suuf_rel_hom}, which is similar to that of \cite[Lemma 3.6]{CST}.
	
	\begin{proof}[Proof of  \ref*{Improved_bound}\ref{Suuf_rel_hom}]
		
		Let $s(M,N)=r$. We note that $G\mbox{-} dim (M) \geq r$. If $G\mbox{-} dim (M)=0$ or $r=0$, there is nothing to establish. Therefore, it is reasonable to assume $G\mbox{-} dim (M) \geq r \geq 1.$ The assumption $(ii)$ suggests that $depth (\mathcal{G}Tor_r ^{R} (M,N))=0$. Hence the depth formula in Proposition \ref{depth_formula} yields the following:
		
		\begin{align*}
			depthM + depthN & = depth(\mathcal{G}Tor_r ^{R} (M,N)) + depthR -r \\
			& = depthR -r  < depthR.
		\end{align*}
		
		This implies $G\mbox{-} dim (M) > depthN$, which contradicts our assumption $(i)$. Hence we must have $r=0$. 
	\end{proof}
	
	The next example shows that the vanishing of non-positive Tate homologies in statement \ref*{Improved_bound}\ref{Suuf_rel_hom} is not necessary. 
	
	\begin{eg}\cite[Example 3.3]{CLS}
		Let $R = k[[x,y]]/(xy)$, where $k$ is a field, $M=R/(x)$ and $N=R/(y)$. Then $M$ and $N$ are totally reflexive $R\mbox{-}$modules so that $s(M,N)=0$ and $G\mbox{-} dim (M)=0 < 1= depthN$. But one can check that $\widehat{Tor} _i ^{R} (M,N) \cong k$\: for all negative integers $i$.
	\end{eg} 
	
	Before we discuss the proof of Proposition \ref{Char_Gdim_poistive}, it is important to note that should $r=0$, the conclusion of the assertion may not necessarily be true. For instance, consider the residue field $k$ of the local ring $R$ in the example \ref{main_eg}. The first relative homology $\mathcal G Tor_{1} ^{R} (k,k)$ vanishes while $G\mbox{-}dim _{R} (k) =1.$ 
	
	\begin{proof}[Proof of Proposition \ref{Char_Gdim_poistive}] In view of Theorem \ref{Char_Gdim}, it is enough to show the implication $(4) \implies (1)$. Assume $\mathcal G Tor_{r+1} ^{R} (M,k) = 0$ for some $r\geq 1$. Consider a $\cG$-approximation of $M$  \begin{align} \label{seq_g_approximation} 
			0 \rar X \rar G \rar M \rar 0
		\end{align} where $pd(X) < \infty$ and $G$ is a totally reflexive $R\mbox{-}$ module. 
		
		Theorem \ref{Res_2}\ref{proper_ses} suggests that \eqref{seq_g_approximation} is a proper exact sequence. So for every non-negative integer $r,$ the long exact sequences in Theorems \ref*{Res_2}\ref{seq_rel} and \ref*{Res_2}\ref{seq_ART} induce the following maps 
		\begin{equation*}
			\begin{gathered}
				\xymatrix@C-=1.2cm@R-=2.2cm{Tor_{r+1}^R(M,k) \ar[r]^{\partial^R_{r}} \ar[d]^{\epsilon_{r+1}^{\cG}(M,k)}& Tor_{r}^R(X,k)  \ar[d]^{{\epsilon_{r}^{\cG}(X,k)}}\\ \cG Tor_{r+1}^R(M,k) \ar[r]^{\partial_r^{\cG}} & \cG Tor_{r}^R(X,k)  }
			\end{gathered} 
		\end{equation*} Since $pd(X)< \infty$, it follows from Theorem \ref*{Res_2}\ref{Pd+hom} that the right vertical map ${\epsilon_{r}^{\cG}(X,k)}$ is an isomorphism for each $r \geq 1$. As $r\geq 1,$ and $G$ is totally reflexive so by Theorem \ref{Char_Gdim}, $\cG Tor_{r+1}^R(G,k) = \cG Tor_{r}^R(G,k) = 0$. This means that  $\partial_r^{\cG}$ is an isomorphism. Thus, the hypothesis implies that $Tor_r^R(X,k) = 0$ whence $pd(X) \leq r-1.$ This shows that $G\mbox{-}dim_{R} (M) \leq  r.$
	\end{proof}
	
	It has been observed in \cite[Remark 4.1.4]{AM} that a module which admits a proper resolution may not necessarily have finite Gorenstein dimension. Our next observation, however, shows that this could be the case over a certain class of rings, provided that $s(-,-)$ is finite.
	
	\begin{remark}\label{Golod}
		Let $R$ be a Golod ring which is not a hypersurface, and let $M$ be an $R\mbox{-}$module which admits a proper resolution. Assume that $s(M,N) < \infty$ for some $R\mbox{-}$module $N$. By \cite[Example 3.5.2]{AM}, if $\mathcal{F} \rar M$ is a minimal free resolution, then it is a proper resolution as well. Hence by definition $Tor_i ^{R}  (M,N) = \mathcal{G}Tor_i ^{R}  (M,N)$ \:for all $i \geq 0$, that is, $q(M,N) = s(M,N) < \infty$. The assertion thus follows from \cite[Theorem 3.1]{J}. In particular,  $q(M,N)=s(M,N)=t(M,N)$ by \cite[Theorem 2.2]{J}.
	\end{remark}
	
	Now we come to the example which we advertised in the beginning.

	\begin{eg}\label{main_eg}
		Let $(R,m,k)$ be a one dimensional Gorenstein local ring which is not regular. Then \cite[Theorem 3.2.10]{BH} gives us the isomorphism $Ext^{1} _{R} (k,R) \cong k$. Dualizing $0 \rar m \rar R \rar k \rar 0$ gives an exact sequence 
		\begin{equation}\label{Standard_dual}
			0 \rar R \rar m^* \rar k \rar 0.
		\end{equation}
		
		The maximal ideal $m$ is a totally reflexive $R\mbox{-}$module, and so it follows from Theorem \ref*{G-dim}\ref{M^*_TR} $m^*$ is totally reflexive as well . Thus there exists an element $x \in m$ which is regular on $m^*$ and $R$. Tensoring the sequence \ref{Standard_dual} with $R/x$ we get
		
		$$0 \rar Tor_1 ^{R} (k,\overline{R}) \rar \overline{R} \rar \overline{m^*} \rar \overline{k} \rar 0,$$
		where $\overline{(-)}$ denotes $(-) \otimes_{R} R/x$. We note that $\overline{k} =k$ as $x \in ann(k)$. Thus we have the following diagram of homomorphisms 
		
		\begin{equation*}
			\begin{gathered}    	
				\xymatrix{& m^* \ar[r]^{\cong} \ar[d] & m^* \ar[d]^{x} & \\0 \ar[r] & R \ar[r] \ar[d]  &   m^* \ar[r] \ar[d] & k \ar[r] \ar[d] & 0 \\ 0 \ar[r] & K \ar[r] &  \overline{m^*} \ar[r] & k \ar[r] & 0}
			\end{gathered}
		\end{equation*}
		where $K$ is the kernel of the map $\overline{m^*} \rar k$. The diagram shows that $m^*$ can be viewed as an submodule of $R$ and hence as an ideal of $R$. We note that the sequence \ref{Standard_dual} is proper exact by Theorem \ref*{Res_2}\ref{proper_ses}. Therefore on applying Proposition \ref*{Res_2}\ref{seq_rel} on the sequence \ref{Standard_dual} with $k$ and using Theorem \ref{Char_Gdim} yields an exact sequence $$ 0 \rar \mathcal{G}Tor_{1} ^{R} (k,k) \rar k \rar m^* \otimes_{R} k \rar k \rar0.$$
		
		This shows that $\mu(m^*) \leq 2$, where $\mu(M)$ denotes the minimal number of generators of an $R-$module $M$. If $\mu(m^*)=1$, that is $m^*$ is a principal ideal, then $m^*$ is free $R\mbox{-}$module whence $m$ is free, contradicting the non$\mbox{-}$regularity condition on $R$. Hence $\mu(m^*)=2$ which implies $\mathcal{G}Tor_{1} ^{R} (k,k) =0$.
	\end{eg}

\end{document}